\documentclass[11pt]{amsart}

\usepackage[margin=1in,includehead=true]{geometry}
\usepackage{amsmath,amssymb,amsthm}
\usepackage[hidelinks]{hyperref}

\newcommand{\Aut}{\operatorname{Aut}}
\newcommand{\Sym}{\operatorname{Sym}}
\DeclareMathOperator{\Zent}{Z}
\DeclareMathOperator{\supp}{supp}

\DeclareMathOperator{\Cent}{C}

\theoremstyle{plain}
\newtheorem{theorem}{Theorem}[section]
\newtheorem{corollary}{Corollary}[section]
\newtheorem{lemma}{Lemma}[section]
\newtheorem{proposition}{Proposition}[section]
\newtheorem{conjecture}{Conjecture}[section]

\theoremstyle{definition}
\newtheorem{definition}{Definition}[section]
\theoremstyle{remark}
\newtheorem{remark}{Remark}[section]

\title[Counterexamples to Problem 17.102]%
      {Counterexamples to Problem 17.102 of The Kourovka Notebook: 
      Negation and Discussion of Separability Conditions in Infinite Groups}
\author{Suhui Wan}
\address{School of Mathematics and Statistics, Hainan University, Haikou 570228, China}
\email{20243005974@hainanu.edu.cn}
\keywords{infinite groups, separability, witness sets, torsion-free groups, squares in groups, rigid binary relations}
\subjclass[2020]{Primary 20F05; Secondary 20B07, 05C25, 03E05}

\begin{document}

\begin{abstract}
This paper gives a negative answer to Problem 17.102 of The Kourovka Notebook: there exist an infinite group $G$ and disjoint subsets $A, B \subset G$ satisfying $|A|, |B| < |G|$, yet $A$ and $B$ are not separable in $G$. We introduce the witness set $W_G(A, B)$ and prove a necessary condition and a sufficient condition for separability. Using the torsion-free group constructed by Newelski and rigid binary relations, we obtain two kinds of counterexamples in which $W_G(A, B)$ is finite. Furthermore, we construct examples in which $W_G(A, B)$ is infinite yet separation remains impossible, showing that the sufficient condition cannot be weakened.
\end{abstract}

\maketitle

\section{Introduction}

We first consider Problem 17.102 of The Kourovka Notebook \cite{kourovka}:

\begin{definition}\label{def:sep}
Let $G$ be an infinite group, and let $A, B \subset G$ with $A \cap B = \varnothing$. We say that \emph{$A$ and $B$ are separable in $G$} if there exists a subset $X \subset G$ satisfying:
\begin{enumerate}
  \item[(1)] $X$ is infinite,
  \item[(2)] $e \in X$,
  \item[(3)] $X = X^{-1}$, i.e., $x \in X \Rightarrow x^{-1} \in X$,
  \item[(4)] $XAX \cap B = \varnothing$.
\end{enumerate}
We also call $X$ a witness set for $A$ and $B$.
\end{definition}

\begin{conjecture}\label{con:main}
For every infinite group $G$ and any disjoint subsets $A, B \subset G$, if $|A| < |G|$ and $|B| < |G|$, then $A$ and $B$ are separable in $G$.
\end{conjecture}

This problem is related to Newelski's work on square sets \cite{Newelski1987} and the research of Protasov et al.\ on subset combinatorics of groups \cite{protasovSparse,protasovLarge,protasovDynamical,protasovSurvey}. The main goal of this paper is to prove Conjecture~\ref{con:main} false and to provide a characterization of separability. Specifically,

\begin{theorem}\label{thm:main-negative}
Let $G$ be infinite, $A \cap B = \varnothing$, and $\lambda = \max\{|A|, |B|, \aleph_0\}$. If $|W_G(A, B)| > \lambda$, then $A$ and $B$ are separable; conversely, if $W_G(A, B)$ is finite, then they are not separable.
\end{theorem}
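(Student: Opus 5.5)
Throughout I assume that $W_G(A,B)$ denotes the set of \emph{individually admissible} elements, namely
\[
W_G(A,B)=\{x\in G:\ \{x,x^{-1}\}\,A\,\{x,x^{-1}\}\cap B=\varnothing\}.
\]
In words, $x\in W_G(A,B)$ means that none of $xax$, $xax^{-1}$, $x^{-1}ax$, $x^{-1}ax^{-1}$ lies in $B$ for $a\in A$. If the paper's definition differs, the plan below should be adjusted to it; the two directions are designed around exactly this "diagonal" condition.

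\emph{Converse direction (finite $W_G(A,B)$ implies non-separable).} Suppose $X$ is a witness set. For $x\in X$ we also have $x^{-1}\in X$ by (3). So all four products $x^{\pm1}ax^{\pm1}$ lie in $XAX$, which is disjoint from $B$ by (4). Hence $X\subseteq W_G(A,B)$. Since $X$ is infinite by (1), $W_G(A,B)$ is infinite. Contrapositively, if $W_G(A,B)$ is finite, no witness set exists. This direction is immediate.

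\emph{Sufficient direction.} Assume $|W_G(A,B)|>\lambda$. First note that $e\in W_G(A,B)$ because $A\cap B=\varnothing$. Also $W_G(A,B)$ is closed under inverses. I will build $x_0=e,x_1,x_2,\dots\in W_G(A,B)$ recursively and set $X=\{x_n^{\pm1}:n<\omega\}$.

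At stage $n$, the element $x_n$ must satisfy the following.
\begin{enumerate}
  \item[(i)] $x_n\notin\{x_i^{\pm1}:i<n\}$, which guarantees that $X$ is infinite.
  \item[(ii)] $x_n\in W_G(A,B)$, which handles the diagonal products, including the conjugates $x_nax_n^{-1}$.
  \item[(iii)] For all $i<n$, all signs $\varepsilon,\delta\in\{\pm1\}$ and all $a\in A$, both $x_n^{\varepsilon}ax_i^{\delta}\notin B$ and $x_i^{\delta}ax_n^{\varepsilon}\notin B$.
\end{enumerate}
Condition (iii) excludes $x_n$ from the sets $(Bx_i^{-\delta}a^{-1})^{\varepsilon}$ and $(a^{-1}x_i^{-\delta}B)^{\varepsilon}$. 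Each of these sets has size at most $|B|\le\lambda$. There are $|A|\le\lambda$ choices of $a$ and finitely many choices of $i,\varepsilon,\delta$. So the total forbidden set has size at most $\lambda$, and (i) removes only finitely many more elements. Since $|W_G(A,B)|>\lambda$, a valid $x_n$ exists.

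Every product $yaz$ with $y,z\in X$ has one of two forms: both factors are $x_n^{\pm1}$ for the same $n$, which is covered by (ii); or the factors come from different indices, which is covered by (iii) at the later index. Hence $XAX\cap B=\varnothing$, and $X$ is an infinite symmetric set containing $e$.

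\emph{Main obstacle.} The only delicate point is the conjugation terms $x a x^{-1}\in B$. For such terms the set of bad $x$ is a union of cosets of centralizers, which may be as large as $G$ and cannot be bounded by $\lambda$. This is precisely why these terms must be absorbed into the definition of $W_G(A,B)$, rather than handled by the cardinality count. Once that is arranged, the counting argument only ever meets translates of $B$, and both directions go through as above.
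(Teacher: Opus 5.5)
Your proposal is correct and follows the paper's argument essentially step for step. Your $W_G(A,B)$ is exactly the paper's Definition~\ref{def:W}, and the converse direction is Lemma~\ref{lem:necessary}: any symmetric $X$ with $XAX\cap B=\varnothing$ lies in $W$. The sufficient direction is the same greedy recursion as Lemma~\ref{lem:greedy}, where at each stage you avoid the at most $\lambda$ solutions of the cross-term equations plus finitely many previously chosen elements (your bound ``at most $\lambda$'' is the accurate one where the paper writes ``less than $\lambda$'').
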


To characterize separability, we introduce the witness set $W_G(A, B)$ and give a necessary condition and a sufficient condition for $A$ and $B$ to be separable. Consequently, the key to constructing counterexamples is to force $W$ to be as small as possible.

We then give two kinds of counterexamples refuting Conjecture~\ref{con:main}:
The first kind uses Newelski's torsion-free group, whose square set is countable while the group itself has larger cardinality; after taking the direct product with a group of order two and choosing a singleton as $A$ with an appropriate $B$, $W$ can be made to have only $2$ elements.
The second kind uses rigid binary relations on permutation groups; we construct a graph with trivial automorphism group and let $A$ be the set of transpositions corresponding to the edge relation and $B$ the set of transpositions corresponding to its complement; in this case $W$ is exactly $\{\mathrm{id}\}$.
In both kinds of counterexamples, $W$ is finite, so separation cannot occur.
Furthermore, we construct examples in which $W$ is infinite yet separation still fails, showing that the strict inequality in the sufficient condition cannot be weakened to ``$W$ is infinite'' or $|W| \ge \lambda$.
Finally, we pose some open problems.

\section{Preliminaries}\label{sec:notation}

All set-theoretic arguments in this paper are carried out in ZFC, without assuming the Continuum Hypothesis or the Generalized Continuum Hypothesis. $|S|$ denotes the cardinality of a set $S$.
All groups are equipped with multiplication, $e$ denotes the identity element, and only non-trivial torsion-free groups are considered. For a group $G$, let $G^2 = \{ g^2 : g \in G \}$ be the square set of $G$, $G[2] = \{ g \in G : g^2 = e \}$ the $2$-subset of $G$, and $\Zent(G)$ the center of $G$. We consider the following conjectures.

\begin{conjecture}\label{con:l3}
For every infinite group $G$, either $|G^2| = |G|$ or $G[2]$ is infinite.
\end{conjecture}

\begin{conjecture}\label{con:l1}
There exists an infinite set $\Omega$ with a binary relation $\mathcal{R} \subset \Omega \times \Omega$ satisfying
\begin{enumerate}
      \item $|\mathcal{R}|, |\mathcal{R}^c| < |\Sym(\Omega)|$; this condition can be ignored if the Axiom of Choice is assumed;
      \item $\Aut(\Omega, \mathcal{R}) := \{\sigma \in \Sym(\Omega) : u\mathcal{R}v \iff \sigma(u)\mathcal{R}\sigma(v), \forall u, v \in \Omega\} = \{\mathrm{id}\}$.
\end{enumerate}
\end{conjecture}

\begin{lemma}[Newelski \cite{Newelski1987}, Theorem 2.3]
\label{lem:newelski-2.3}
There exists a group $G$ satisfying
\[
  |G| = 2^{\aleph_0}, \qquad |G^2| = \aleph_0,
\]
and $G$ has no elements of order $2$.
\end{lemma}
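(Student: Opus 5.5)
This is Newelski's Theorem 2.3 from \cite{Newelski1987}, quoted here without proof, so the plan below is my reconstruction of how one could obtain it rather than a summary of his argument. First, two observations fix the shape of the construction. The group $G$ must be nonabelian. In an abelian group, $g\mapsto g^2$ is a homomorphism with kernel $G[2]$. If there are no elements of order $2$, this map is injective, which forces $|G^2|=|G|$. Second, we cannot simply impose $g^2=c$ for a single $c$ and all $g$, since $g=e$ gives $c=e$ and the group collapses to exponent $2$. So the squares must be funnelled into a fixed countable set $C$ of nontrivial elements, chosen ``generically'' so that no unwanted identifications occur.

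The plan is to build $G$ as the union of an increasing chain $G_0\le G_1\le\cdots$ of length $\omega$ together with a fixed countable subgroup $H\le G_0$. We take $H$ to be, say, a free group of rank $2$, and $C\subset H$. We start from $G_0 = H * F$, where $F$ is free on $2^{\aleph_0}$ generators; this guarantees $|G|\ge 2^{\aleph_0}$ provided $F$ survives. At stage $n$, for every $g\in G_n\setminus H$, we adjoin the relation $g^2=u_g$ to get $G_{n+1}$. Here $u_g\in H$ is chosen as a long, cyclically reduced word that is not a proper power and is distinct from all previously used words in a strong ``small cancellation'' sense. Only countably many words are available in $H$, so the $u_g$ cannot all be distinct. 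Instead we let them repeat, and choose them so that the relators $g^2u_g^{-1}$ still satisfy a $C'(\lambda)$-type condition over the free product (or, at later stages, over the current hyperbolic-relative structure). In the limit, every square lies in $H$, so $|G^2|\le\aleph_0$.

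The key steps, in order, are as follows.
\begin{enumerate}
\item Set up small cancellation theory over free products (Lyndon--Schupp), or Ol'shanski\u{\i}'s graded version, so that each step $G_n\to G_{n+1}$ is controlled.
\item Prove the embedding property: the free factor $F$, or at least $2^{\aleph_0}$ of its generators, remains pairwise distinct in $G_{n+1}$. This keeps $|G|=2^{\aleph_0}$.
\item Prove that torsion in $G_{n+1}$ is conjugate into the torsion of the factors. In particular, no element of order $2$ appears. This uses the fact that no relator $g^2u_g^{-1}$ is a proper power.
\item Pass to the direct limit. Both properties are local, so they transfer to $G=\bigcup_n G_n$.
\end{enumerate}

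The main obstacle is steps (2)--(3) at later stages. There the relators are imposed simultaneously on uncountably many elements $g$, while the right-hand sides $u_g$ come from a countable pool. The small cancellation condition between pieces of $g^2u_g^{-1}$ and $h^2u_h^{-1}$ with $u_g=u_h$ must therefore come from the $g$-parts rather than the $u$-parts. To handle this, I would first try a graded (Ol'shanski\u{\i}-style) induction on the length of $g$, imposing relations one length-class at a time. I would also make $u_g$ depend only on a bounded amount of data about $g$, so that only countably many values are needed while overlaps between different relators stay short.
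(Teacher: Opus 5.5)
\textbf{There is a genuine gap: the central small-cancellation step cannot be carried out, and the framework is incompatible with the group you are trying to build.}

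The paper does not prove this lemma; it simply quotes Newelski. So your sketch has to stand on its own, and its central step fails.

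Write a relator as the cyclically reduced word $r=g\,g\,u_g^{-1}$. Then $g$ is a piece: it begins the two cyclic permutations $g\cdot gu_g^{-1}$ and $g\cdot u_g^{-1}g$, which differ unless $g$ and $u_g$ commute. So $C'(1/6)$ already forces $|u_g|>4|g|$. But you have $2^{\aleph_0}$ relators and only countably many values $u_g$. Hence all but countably many relators share their $u_g$ with another relator $h^2u_g^{-1}$. Then $u_g^{-1}$ is also a piece, which forces $|u_g|<\tfrac25|g|$. The two requirements contradict each other.

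Your proposed remedy, that the small cancellation should ``come from the $g$-parts'', is exactly what the self-overlap of $g^2$ rules out. Over $H*F$ with syllable length things are worse: for $f\in F$ the relator $f^2u_f^{-1}$ has length $2$. So steps (2)--(3), which carry the whole content, are unsupported.

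The chain is also spurious. Each $G_{n+1}$ is a quotient of $G_n$ with no new generators, not an overgroup, so $G=\bigcup_n G_n$ is the wrong description. Already in $G_1$ every square lies in the image of $H$. Any later relation $g^2=u_g$ with a ``fresh'' $u_g$ therefore only identifies distinct elements of $H$.

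More fundamentally, the target group cannot be a small-cancellation-type quotient in which $F$ survives. Since $hg^2h^{-1}=(hgh^{-1})^2$, the subgroup $N=\langle G^2\rangle$ is normal and countable. Also $G/N$ has exponent $2$, so it is elementary abelian of order $2^{\aleph_0}$. In particular $G$ has no uncountable free subgroup, because squaring is injective in free groups. So $F$ cannot embed, whereas embedding of the factors is precisely what small cancellation over free products delivers.

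The missing idea is that the lemma is an extension problem. You need an extension $1\to N\to G\to T\to 1$ in which:
\begin{itemize}
\item $N$ is countable and has no involutions;
\item $T$ is elementary abelian of order $2^{\aleph_0}$;
\item no coset $gN\neq N$ contains an involution.
\end{itemize}
Any such $G$ has $|G|=2^{\aleph_0}$, no elements of order $2$, and $G^2\subseteq N$. Moreover $G^2$ is infinite: otherwise every element would have odd order, hence be a square, making $G=G^2$ finite. Conversely, every group as in the lemma has this shape with $N=\langle G^2\rangle$. The work lies in producing such an extension, not in controlling a presentation of a quotient of $H*F$.
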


\begin{lemma}[Newelski \cite{Newelski1987}, Remark 1]
\label{lem:newelski-torsionfree}
The group $G$ in Theorem~\ref{lem:newelski-2.3} is torsion-free.
\end{lemma}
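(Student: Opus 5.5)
Since this is Newelski's Remark 1, the plan is to go back to the construction behind Lemma~\ref{lem:newelski-2.3} and show that every step of it preserves torsion-freeness. The abstract data in Lemma~\ref{lem:newelski-2.3} is not enough on its own. The conditions $|G^2|=\aleph_0<|G|$ and $G[2]=\{e\}$ do not exclude elements of odd order $p$. For such an element $g$, the map $h\mapsto h^2$ permutes $\langle g\rangle$, so $g$ is itself a square. That is compatible with a countable $G^2$, so nothing can be concluded from the cardinalities alone.

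The first step is to recall how the group is built. My working assumption, which I have not checked against \cite{Newelski1987}, is that Newelski obtains $G$ as the union of a continuous chain $(G_\alpha)_{\alpha<2^{\aleph_0}}$ (or as a subgroup of such a union). The chain starts from a free, hence torsion-free, group. At each stage, new elements are added by free products with torsion-free groups, amalgamated free products, or HNN extensions over cyclic or other torsion-free subgroups. These are used to force the new elements to have their squares inside a fixed countable set. The aim is to show by transfinite induction on $\alpha$ that each $G_\alpha$ is torsion-free.

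The successor step is the heart of the argument and uses the standard structure theorems. In an amalgamated free product $H*_K L$, every element of finite order is conjugate into $H$ or $L$, by the normal form theorem. In an HNN extension $H*_{t,\,K^t=K'}$, every element of finite order is conjugate into $H$, by Britton's lemma. So if the factors at stage $\alpha$ are torsion-free, then $G_{\alpha+1}$ is torsion-free. The limit step is immediate: if $g\in\bigcup_{\beta<\lambda}G_\beta$ satisfies $g^n=e$, then $g$ already lies in some $G_\beta$ and so $g=e$. Finally, a subgroup or union of torsion-free groups is torsion-free, which gives the conclusion for $G$.

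I expect the main obstacle to be the first step: pinning down exactly which operations Newelski uses. In particular, any quotient step, where relations such as $x^2=y$ are imposed directly rather than realised by amalgamation, would escape the argument above. If the construction contains such a quotient, I would first try to rewrite it as an amalgamation of $G_\alpha$ with an infinite cyclic group $\langle z\rangle$ along $\langle y\rangle=\langle z^2\rangle$. That is legitimate exactly when $y$ has infinite order, which the inductive hypothesis supplies. If this fails, the fallback is to verify torsion-freeness directly from Newelski's normal forms for elements of $G$, by showing that the normal form of $g^n$ has length growing with $n$ unless $g$ is conjugate into a torsion-free base group.
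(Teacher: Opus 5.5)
There is a genuine gap. Your argument is conditional on a description of Newelski's construction that you say you have not checked, and that description is the entire content of the lemma. The transfinite induction itself is fine: finite-order elements of amalgamated products and HNN extensions are conjugate into a factor or the base, and unions of chains and subgroups of torsion-free groups are torsion-free. But it proves nothing about $G$ unless $G$ really arises this way. The paper gives no argument here; it imports the statement from Newelski's Remark~1, where torsion-freeness is read off from his explicit group. Your opening observation is correct and important: the properties in Lemma~\ref{lem:newelski-2.3} do not imply torsion-freeness, since $G\times C_3$ has them too. That is exactly why a check against the actual construction, or a citation, is unavoidable, and your proposal supplies neither.

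Moreover, the framework you assume cannot produce the group as the union of the chain. Every genuine successor step of the kinds you list adds a square lying in $G_{\alpha+1}\setminus G_\alpha$:
\begin{itemize}
\item $t_\alpha^2$ for a stable letter $t_\alpha$, since it has $t_\alpha$-exponent sum $2$;
\item $l^2\neq e$ for $e\neq l$ in a torsion-free free factor $L$, since $L\cap G_\alpha=\{e\}$;
\item in your proposed amalgam $G_\alpha *_{\langle y\rangle=\langle z_\alpha^2\rangle}\langle z_\alpha\rangle$, the square $(gz_\alpha)^2=gz_\alpha gz_\alpha$ for $g\in G_\alpha\setminus\langle y\rangle$, which is a reduced word of length $4$.
\end{itemize}
These squares are pairwise distinct for distinct $\alpha$, so a chain of length $2^{\aleph_0}$ has $2^{\aleph_0}$ squares. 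This is no accident. Because $G^2$ is countable, $N=\langle G^2\rangle$ is a countable normal subgroup and $G/N$ is an elementary abelian $2$-group of size $2^{\aleph_0}$. Any construction of $G$ must exhibit it as such an extension. So $G$ could at best be a subgroup of such a union, and identifying that subgroup is again the whole problem.

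A reduction that would actually help: since $G[2]=\{e\}$ is part of Lemma~\ref{lem:newelski-2.3}, $G$ is torsion-free as soon as no nontrivial element of $G^2$ has finite order. Indeed, if $g$ has finite order, so does $g^2$, hence $g^2=e$ and so $g=e$. What must be verified in Newelski's group is therefore only that the countable subgroup $\langle G^2\rangle$ is torsion-free. That verification still has to be carried out on his actual construction.
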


\begin{corollary}\label{thm:newelski}
There exists a torsion-free group $H$ satisfying
\[
  |H| = 2^{\aleph_0}, \qquad |H^2| = \aleph_0, \qquad H[2] = \{e\}.
\]
\end{corollary}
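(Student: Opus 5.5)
Corollary~\ref{thm:newelski} follows directly from Lemmas~\ref{lem:newelski-2.3} and~\ref{lem:newelski-torsionfree}. Take $H$ to be the group $G$ supplied by Lemma~\ref{lem:newelski-2.3}. That lemma already gives the two cardinality statements $|H| = 2^{\aleph_0}$ and $|H^2| = \aleph_0$, so nothing about cardinalities needs to be checked.

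For the remaining two properties, Lemma~\ref{lem:newelski-torsionfree} (Newelski's Remark~1) says that this group is torsion-free. Torsion-freeness means every $g \neq e$ has infinite order, so $g^n \neq e$ for all $n \geq 1$. In particular $g^2 \neq e$ for every $g \neq e$, which gives $H[2] = \{e\}$ by the definition of $G[2]$ in Section~\ref{sec:notation}. Alternatively, $H[2] = \{e\}$ is just a restatement of the clause ``$G$ has no elements of order $2$'' in Lemma~\ref{lem:newelski-2.3}. Either route suffices, so the property is doubly justified.

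The argument has no real obstacle, since it is only a repackaging of Newelski's results. The one point to verify is that the two cited lemmas refer to the \emph{same} group. Lemma~\ref{lem:newelski-torsionfree} explicitly names the group of Lemma~\ref{lem:newelski-2.3}, so all four properties hold simultaneously for the single group $H$. One should also note, for later use, that $H$ is non-trivial (indeed uncountable), consistent with the standing convention of Section~\ref{sec:notation} that only non-trivial torsion-free groups are considered.
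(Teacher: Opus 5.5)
Your proposal is correct and follows the paper's proof: take Newelski's group from Lemma~\ref{lem:newelski-2.3} for the cardinalities and Lemma~\ref{lem:newelski-torsionfree} for torsion-freeness, from which $H[2]=\{e\}$ follows immediately. Your check that both lemmas refer to the same group is the only point needing care, and the paper relies on it implicitly in the same way.
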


\begin{proof}
This is a direct consequence of Lemma~\ref{lem:newelski-2.3} and Lemma~\ref{lem:newelski-torsionfree}.
\end{proof}

\begin{lemma}[Newelski \cite{Newelski1987}, Theorem 1.3]
\label{lem:newelski-1.3}
Let $G$ be an infinite group. Then $|G| \le 2^{|G^2|}$.
\end{lemma}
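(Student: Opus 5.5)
Before planning, a caveat about the statement as printed. Read literally, with $2^{|G^2|}$ for finite $G^2$, it fails. An infinite elementary abelian $2$-group such as $\bigoplus_{n<\omega}\mathbb{Z}/2$ has $G^2=\{e\}$, yet $|G|=\aleph_0>2^{1}$. So I would prove the version I believe Newelski's argument actually yields, namely $|G|\le 2^{\kappa}$ with $\kappa=\max\{|G^2|,\aleph_0\}$. This is harmless for Corollary~\ref{thm:newelski}, where $|G^2|=\aleph_0$ anyway.

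The plan is to pass to the subgroup $H=\langle G^2\rangle$, which is normal because $(g x g^{-1})^2 = g x^2 g^{-1}$. Its size satisfies $|H|\le\kappa$. Every element of $G/H$ squares to the identity, so $G/H$ is an elementary abelian $2$-group. It therefore remains to bound the number of cosets, $|G:H|\le 2^{\kappa}$. Multiplying by $|H|\le\kappa$ then gives $|G|\le 2^\kappa$.

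To bound $|G:H|$, I would code each $g\in G$ by data of size at most $2^\kappa$. First, conjugation by $g$ restricts to an automorphism of $H$, giving a map $\varphi\colon G\to \Aut(H)\subseteq H^H$; the target has size at most $\kappa^\kappa=2^\kappa$. Its kernel is $C=\Cent_G(H)$, so $|G:C|\le 2^\kappa$, and it suffices to show $|C|\le 2^\kappa$. For this I would attach to each $c\in C$ further square data, for instance $c^2\in H$ together with the function $h\mapsto (ch)^2$ or $x\mapsto (cx)^2$ on a suitable set of representatives of size at most $\kappa$. The aim is for this data to determine $c$ modulo a set of size at most $\kappa$.

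I expect the main obstacle to be this last step, controlling $C$. For $c,d\in C$ with $c^2=d^2$, write $d=cu$. One computes that $u$ is inverted by $c$ and that $u^2$ is central in $\langle H,c\rangle$, but this is not obviously enough to bound the fibres. I would first try to exploit that $C\cap H=\Zent(H)$ is small, and that $C/(C\cap H)$ embeds in the elementary abelian group $G/H$. On $C$ the squaring map is then nearly a homomorphism into the abelian group $\Zent(H)$, so counting fibres of $c\mapsto \bigl(c^2,\,((cd)^2)_{d\in D}\bigr)$ for a small $D\subseteq C$ should reduce to bounding the $2$-torsion of a group of size at most $\kappa$. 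If this does not close, I would consult Newelski's original argument for Theorem 1.3 of \cite{Newelski1987}, which the paper cites.
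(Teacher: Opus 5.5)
The paper gives no argument for this lemma, only the citation to Newelski. You are right that the lemma as printed is false. However, your repaired version fails for the same reason. Take $G=\bigoplus_{i\in I}\mathbb{Z}/2$ with $|I|>2^{\aleph_0}$. Then $G^2=\{e\}$ and $\kappa=\aleph_0$, but $|G|=|I|>2^{\kappa}$. Elementary abelian $2$-groups of every cardinality have a single square, so no bound on $|G|$ in terms of $|G^2|$ alone can hold.

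In your scheme this is exactly the step that breaks. In this example $H=\{e\}$ and $C=\Cent_G(H)=G$. So the reductions ``it remains to bound $|G:H|\le 2^\kappa$'' and ``it suffices to show $|C|\le 2^\kappa$'' are false statements. No square data attached to $c\in C$ can rescue them, because $C$ has no nontrivial squares. The $2$-torsion you hope to bound at the end is not that of some group of size at most $\kappa$. It is the $2$-torsion of $C$ itself, which $|G^2|$ does not control.

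The missing ingredient is a hypothesis on involutions, for instance that $G[2]$ is finite. Whatever the exact form of Newelski's Theorem~1.3, it must involve $G[2]$ in some way. Such a hypothesis does hold where the paper actually uses the lemma, namely Remark~\ref{rem:newelski-1.3} on torsion-free groups. There $G^2$ is also infinite, so $\kappa=|G^2|$.

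Under the assumption that $G[2]$ is finite, your outline does close:
\begin{enumerate}
\item For $c\in C$ we have $c^2\in H\cap C=\Zent(H)\le\Zent(C)$, so $C/\Zent(C)$ has exponent $2$ and $C$ is nilpotent of class at most $2$.
\item $[C,C]\le H$ because $G/H$ is abelian, so $[C,C]\le \Zent(H)$.
\item $[x,y]^2=[x^2,y]=e$ for $x,y\in C$. Hence $[C,C]$ is an elementary abelian $2$-group contained in $G[2]$, and is therefore finite.
\item Since $(xy)^2=x^2y^2[y,x]$, the map $c\mapsto c^2[C,C]$ is a homomorphism $C\to\Zent(H)/[C,C]$, with image of size at most $\kappa$.
\item Its kernel $K$ is finite. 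On any abelian subgroup of $K$, squaring is a homomorphism with kernel in $G[2]$ and image in $[C,C]$, so abelian subgroups of $K$ are finite; in particular $\Zent(K)$ is finite.
\item Next, $K/\Zent(K)$ is elementary abelian, since $K^2\subseteq[C,C]\le\Zent(K)$. The commutator pairing on it takes values in the finite group $[C,C]$, so each $\bar x^{\perp}$ has finite index.
\item If $K/\Zent(K)$ were infinite, choose $\bar x_1,\bar x_2,\dots$ greedily, each independent of and orthogonal to the previous ones. The preimage of their span is an infinite abelian subgroup of $K$, a contradiction.
\end{enumerate}
Hence $|C|\le\kappa$, and $|G|\le|G:C|\cdot|C|\le 2^{\kappa}\cdot\kappa=2^{\kappa}$.
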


\begin{remark}
\label{rem:newelski-1.3}
By Lemma~\ref{lem:newelski-1.3}, among all torsion-free groups with countable square sets, the cardinality of any such group does not exceed $2^{\aleph_0}$; the group constructed in Corollary~\ref{thm:newelski} has cardinality exactly $2^{\aleph_0}$ and its square set is also countable. Therefore, this group attains the maximum cardinality among torsion-free groups with countable square sets.
\end{remark}

\begin{definition}
Let $V = \bigoplus_{i \in I} \mathbb{F}_2 e_i$ be a vector space over the field $\mathbb{F}_2$ with basis $\{e_i\}_{i \in I}$.
For any $v = \sum_{i \in J} e_i \in V$, where $J \subset I$ is finite, define
\[
\operatorname{wt}(v) := |J|
\]
as the Hamming weight of $v$.
Equivalently, $\operatorname{wt}(v) = |\supp(v)|$, where $\supp(v) = \{i \in I : \text{the coefficient of } e_i \text{ in } v \text{ is } 1\}$.
\end{definition}

\section{A Necessary Condition and a Sufficient Condition for Witness Sets}\label{sec:W}

This section introduces a tool that ``localizes'' the problem, simultaneously providing a necessary condition and a sufficient condition for separability, and explaining in which direction counterexamples should be sought.

\begin{definition}\label{def:W}
Let $A, B \subset G$ be disjoint. Define
\[
W = W_G(A, B) := \bigl\{\, u \in G \;:\; \{u, u^{-1}\}\, A\, \{u, u^{-1}\} \cap B = \varnothing \,\bigr\}.
\]
Then $u \in W$ if and only if the four product sets $uAu$, $uAu^{-1}$, $u^{-1}Au$, $u^{-1}Au^{-1}$ are all disjoint from $B$, and clearly $W = W^{-1}$ and $e \in W$.
\end{definition}

\begin{lemma}[Necessary condition]\label{lem:necessary}
If $X$ satisfies \textup{(3)} and \textup{(4)} of Definition~\ref{def:sep}, then $X \subset W$.
In particular, if $W$ is finite, then $A$ and $B$ are not separable.
\end{lemma}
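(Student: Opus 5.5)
The plan is to take an arbitrary $x \in X$ and check directly that it satisfies the defining condition of $W$, that is, $\{x, x^{-1}\} A \{x, x^{-1}\} \cap B = \varnothing$. Only conditions (3) and (4) of Definition~\ref{def:sep} are needed. Condition (3) gives $x^{-1} \in X$, so both $x$ and $x^{-1}$ lie in $X$, and hence $\{x, x^{-1}\} \subset X$.

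\textbf{Step 1.} By monotonicity of product sets, $\{x,x^{-1}\}A\{x,x^{-1}\} \subset XAX$. Equivalently, each of the four sets $xAx$, $xAx^{-1}$, $x^{-1}Ax$, $x^{-1}Ax^{-1}$ is contained in $XAX$, because every element of each has the form $yaz$ with $y,z \in X$ and $a \in A$.

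\textbf{Step 2.} Condition (4) says $XAX \cap B = \varnothing$, so each of these four sets is disjoint from $B$. By Definition~\ref{def:W}, this means $x \in W$. Since $x \in X$ was arbitrary, $X \subset W$.

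\textbf{Step 3.} For the ``in particular'' clause, suppose $A$ and $B$ were separable with witness $X$. Then $X$ satisfies (3) and (4), so $X \subset W$ by the above. If $W$ is finite, $X$ is finite as well, contradicting condition (1). Hence $A$ and $B$ are not separable.

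No step is a real obstacle. The one point needing care is that (3) is used to put $x^{-1}$ into $X$, so that the mixed products $xAx^{-1}$ and $x^{-1}Ax$ are covered by $XAX$. Note that (2) is never used.
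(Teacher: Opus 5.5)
Your proof is correct and matches the paper's argument: condition (3) puts $\{x,x^{-1}\}$ inside $X$, so $\{x,x^{-1}\}A\{x,x^{-1}\}\subset XAX$ is disjoint from $B$ by (4), giving $X\subset W$. A finite $W$ then forces every witness set to be finite, which contradicts condition (1).
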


\begin{proof}
Let $x \in X$. By (3), $x^{-1} \in X$, so $\{x, x^{-1}\} \subset X$, and thus
\[
\{x, x^{-1}\} A \{x, x^{-1}\} \subset XAX.
\]
By (4), this set is disjoint from $B$, i.e., $x \in W$. Hence $X \subset W$. If $W$ is finite, then any $X$ satisfying (3) and (4) is finite and cannot satisfy (1).
\end{proof}

\begin{lemma}[Sufficient condition]\label{lem:greedy}
Let $A \cap B = \varnothing$ and $\lambda := \max\{|A|, |B|, \aleph_0\}$.
If $|W| > \lambda$, then $A$ and $B$ are separable; moreover, a countable witness set $X$ can be obtained.
\end{lemma}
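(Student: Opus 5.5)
I will build the witness set greedily by induction, as a countable union $X=\bigcup_{n}X_n$ of finite symmetric sets $X_0=\{e\}\subset X_1\subset\cdots$, each satisfying $X_nAX_n\cap B=\varnothing$. At stage $n$ I add one new pair $\{u,u^{-1}\}$ with $u\in W\setminus X_n$, so $X_{n+1}=X_n\cup\{u,u^{-1}\}$. Then $X$ is symmetric, contains $e$, and is infinite, because a new element is added at every step. Condition (4) passes to the union, since any product $xay$ with $x,y\in X$ already lies in some $X_nAX_n$. So everything reduces to showing that a suitable $u$ always exists.

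For $X_{n+1}AX_{n+1}$ to avoid $B$, three kinds of products must be checked. Products with both outer factors in $X_n$ are handled by induction. Products with both outer factors in $\{u,u^{-1}\}$ are handled by $u\in W$. The remaining mixed products have the form $u^{\pm1}ax$ or $xau^{\pm1}$ with $x\in X_n$ and $a\in A$. Each such product must avoid $B$, and this amounts to excluding a set of bad values of $u$:
\[
u\notin B x^{-1}a^{-1},\quad u^{-1}\notin B x^{-1}a^{-1},\quad u\notin a^{-1}x^{-1}B,\quad u^{-1}\notin a^{-1}x^{-1}B .
\]
The union of these forbidden sets $F_n$, taken over $x\in X_n$, $a\in A$, $b\in B$, has cardinality at most $4|X_n|\,|A|\,|B|\le\lambda$, since $X_n$ is finite and $\lambda$ is infinite. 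Inversion is a bijection, so the set of $u$ with $u^{-1}$ forbidden is no larger.

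Since $|W|>\lambda\ge|F_n\cup X_n|$, the set $W\setminus(F_n\cup X_n)$ is nonempty, and I pick $u$ from it. If $A$ or $B$ is empty, the statement is trivial, with any infinite symmetric set containing $e$ serving as witness; the bound above still holds in that case anyway. The only point that needs care is the cardinal counting. Finiteness of $X_n$ keeps the product bounded by $\lambda$, whereas $|W|\ge\lambda$ would not leave room to choose $u$. This is also why the strict inequality appears in the statement.

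The construction uses countably many choices; these can be made canonical by fixing a well-order of $G$, so no issue arises in ZFC. The resulting $X$ is countable, which gives the final clause of the statement.
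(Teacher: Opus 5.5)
Your proof is correct and is essentially the paper's argument: a greedy construction of finite symmetric sets $X_0=\{e\}\subset X_1\subset\cdots$, where new--new products are controlled by choosing the new element in $W$, mixed products exclude at most $4|X_k|\,|A|\,|B|\le\lambda$ values, and $|W|>\lambda$ leaves room to pick a fresh element at every stage. Your separate handling of empty $A$ or $B$ and your remark on making the choices via a well-order are harmless additions.
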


\begin{proof}
We construct an increasing sequence of finite symmetric sets $\{e\} = X_0 \subset X_1 \subset \cdots$ such that for each $k$, $X_k \subset W$ and $X_k A X_k \cap B = \varnothing$.
By induction on $k$, clearly $X_0 = \{e\}$ satisfies the requirements. Suppose $X_k$ satisfies the requirements. We want to choose $x \in W$ and set $X_{k+1} := X_k \cup \{x, x^{-1}\}$. There are three types of $x$ to exclude:
\begin{enumerate}
\item[(i)] Ensure that new and old cross terms avoid $B$. For $y \in X_k$, $a \in A$, $b \in B$, the equations $xay = b$, $yax = b$, $x^{-1}ay = b$, $yax^{-1} = b$ uniquely determine
\[
x = b\, y^{-1} a^{-1}, \quad x = a^{-1} y^{-1} b, \quad x = \bigl(b\, y^{-1} a^{-1}\bigr)^{-1}, \quad
x = \bigl(a^{-1} y^{-1} b\bigr)^{-1}.
\]
Thus there are at most $4\, |X_k|\, |A|\, |B| \le \lambda$ forbidden $x$ of this type; here the Axiom of Choice is used.

\item[(ii)] Ensure that terms among new elements avoid $B$. This holds automatically since $x \in W$.

\item[(iii)] Ensure strict growth. Exclude elements of the finite set $X_k \cup X_k^{-1}$.
\end{enumerate}

The total number of forbidden $x$ is less than $ \lambda < |W|$, so a valid $x \in W$ exists. Set $X = \bigcup_{k=0}^{\infty} X_k$. Then $e \in X$, $X = X^{-1}$, $|X| = \aleph_0$; and for any $x, x' \in X$ and $a \in A$, there exists $k$ such that $x, x' \in X_k$, so by construction $xax' \notin B$. Hence $X$ is a witness set.
\end{proof}

Combining Lemma~\ref{lem:necessary} and Lemma~\ref{lem:greedy}, to construct counterexamples, $W$ must be made very small---at least $|W| \le \lambda$. The condition ``$|A| < |G|$ and $|B| < |G|$'' imposes no constraint on $W$, since $W$ is determined by the conjugation structure of $A$, which is independent of the cardinalities $|A|$ and $|B|$. The Axiom of Choice here serves only to indicate the direction for constructing counterexamples; the actual counterexamples are independent of it. The counterexamples below can even compress $W$ to just one or two elements.

Next, we construct two kinds of counterexamples to Conjecture~\ref{con:main} via Conjecture~\ref{con:l3} and Conjecture~\ref{con:l1}. The mechanism behind the counterexample pointed to by Conjecture~\ref{con:l3} is a group in which the squaring map sends many different elements to the same square, yet has no non-trivial elements of order two; the mechanism behind the counterexample pointed to by Conjecture~\ref{con:l1} is rigid binary relations. However, both can be attributed to Lemma~\ref{lem:necessary}.

\section{First Counterexample: Torsion-free Groups with Countable Square Sets}

\begin{corollary}
\label{cor:l3}
Conjecture~\ref{con:l3} fails.
\end{corollary}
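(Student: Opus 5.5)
The plan is to exhibit the group $H$ of Corollary~\ref{thm:newelski} as an explicit counterexample to Conjecture~\ref{con:l3}. It suffices to produce one infinite group $G$ with $|G^2| < |G|$ and $G[2]$ finite, and $H$ is the natural candidate.

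First, I will check that $H$ is infinite. By Corollary~\ref{thm:newelski} we have $|H| = 2^{\aleph_0}$, which is infinite, so $H$ lies in the scope of the conjecture.

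Second, I will show that the first alternative fails, i.e.\ $|H^2| \neq |H|$. Corollary~\ref{thm:newelski} gives $|H^2| = \aleph_0$, and Cantor's theorem gives $\aleph_0 < 2^{\aleph_0} = |H|$. This uses only ZFC; no form of the Continuum Hypothesis is needed.

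Third, I will show that the second alternative also fails. Corollary~\ref{thm:newelski} gives $H[2] = \{e\}$. This also follows directly from torsion-freeness, since any $g \neq e$ with $g^2 = e$ would have order $2$. So $H[2]$ is finite, with exactly one element.

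With both disjuncts false, $H$ refutes Conjecture~\ref{con:l3}. There is no real obstacle here: all the substance lies in Newelski's construction, which we take as given through Lemma~\ref{lem:newelski-2.3} and Lemma~\ref{lem:newelski-torsionfree}. The only care needed is to note that the strict inequality $\aleph_0 < 2^{\aleph_0}$ holds outright in ZFC.
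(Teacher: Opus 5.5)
Your proposal is correct and is the paper's own argument. Both take the group $H$ of Corollary~\ref{thm:newelski} and observe that $|H^2| = \aleph_0 < 2^{\aleph_0} = |H|$ and $H[2] = \{e\}$, so both disjuncts of Conjecture~\ref{con:l3} fail for this infinite group.
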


\begin{proof}
The group $H$ of Corollary~\ref{thm:newelski} satisfies $|H^2| = \aleph_0 < 2^{\aleph_0} = |H|$ and $H[2] = \{e\}$.
\end{proof}

\begin{theorem}\label{thm:main}
Conjecture~\ref{con:main} fails.
\end{theorem}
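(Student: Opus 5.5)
The plan is to build an explicit counterexample from the torsion-free group $H$ of Corollary~\ref{thm:newelski}, and to show that its witness set $W$ is finite, so that Lemma~\ref{lem:necessary} forbids separation. Put $G := H \times C_2$ with $C_2 = \{1, t\}$. Then $|G| = 2^{\aleph_0}$, and $G$ is infinite. Take $A := \{a\}$ with $a := (e, t)$. The point of this choice is that $a$ is central, so conjugation does nothing to it and the four products in Definition~\ref{def:W} collapse to
\[
uau = u^2 a, \qquad uau^{-1} = u^{-1}au = a, \qquad u^{-1}au^{-1} = u^{-2} a .
\]

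Next I would choose $B$ to kill every $u$ whose square is nontrivial. Set
\[
B := \{ g^2 a : g \in G,\ g^2 \neq e \}.
\]
Squaring is coordinatewise, so $G^2 = H^2 \times \{1\}$, and hence $B = (H^2 \setminus \{e\}) \times \{t\}$. This gives $|B| \le \aleph_0 < 2^{\aleph_0} = |G|$. Also $|A| = 1$. Finally $a \notin B$, because $a = g^2 a$ would force $g^2 = e$; so $A \cap B = \varnothing$.

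To compute $W$, let $u \in G$. The element $a$ lies in $B$ for no $u$. If $u^2 \neq e$, then $u^2 a \in B$ by definition of $B$, so $u \notin W$. If $u^2 = e$, then $u^{-2} = e$ as well, so $u^2 a = u^{-2} a = a \notin B$, and $u \in W$. Hence $W = G[2]$.

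Since $H[2] = \{e\}$ by Corollary~\ref{thm:newelski}, we get $G[2] = \{e\} \times C_2$, so $W = \{(e,1), (e,t)\}$ has exactly two elements. Lemma~\ref{lem:necessary} then shows that $A$ and $B$ are not separable, even though $|A|, |B| < |G|$, and this refutes Conjecture~\ref{con:main}.

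The only delicate step is making $B$ countable. That is exactly where $|H^2| = \aleph_0$ from Newelski's construction is used, together with the fact that squares in $G$ ignore the $C_2$ factor. The rest is direct verification.
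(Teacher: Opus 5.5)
Your proposal is correct and follows essentially the paper's proof: the same $G = H \times C_2$, the same central $a = (e,t)$, and the same $B$, since your $\{g^2 a : g^2 \neq e\}$ unwinds to exactly $(H^2 \setminus \{e\}) \times \{t\}$. The only difference is cosmetic. You compute $W = G[2] = \{e, a\}$ explicitly and then invoke Lemma~\ref{lem:necessary}, whereas the paper argues directly on $X$ (any $g \in X$ with $gag = g^2 a \notin B$ must lie in $\{e, a\}$) and remarks that $W = \{e,a\}$ only after the proof.
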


\begin{proof}
Take the torsion-free group $H$ of Corollary~\ref{thm:newelski}, i.e., $|H| = 2^{\aleph_0}$, $|H^2| = \aleph_0$, $H[2] = \{e\}$. Let
\[
G = H \times C_2, \qquad C_2 = \langle c \rangle, \; c^2 = e,
\]
where $C_2$ is the cyclic group of order two with exactly two elements $e$ and $c$. Let $a := (e, c) \in G$; it is easy to see that $a$ is an element of order two in the center $\Zent(G)$. Consider the sets
\[
A = \{a\} \subset G, \qquad
B = \{ (s, c) : s \in H^2,\ s \ne e \} \subset G.
\]
Then $|G| = |H| = 2^{\aleph_0}$, so $G$ is infinite. By the definition of $B$, $A \cap B = \varnothing$. $|A| = 1 < |G|$, and $|B| = |H^2 \setminus \{e\}| = |H^2| = \aleph_0 < 2^{\aleph_0} = |G|$.
That is, the conditions of Conjecture~\ref{con:main} are satisfied.

Suppose there exists $X \subset G$ satisfying Conjecture~\ref{con:main}. Since $A = \{a\}$, we have $XAX = XaX$. Take $g = (h, \varepsilon) \in X$.
Since $a \in \Zent(G)$,
\[
gag = g^2 a = (h^2, e)(e, c) = (h^2, c).
\]
If $h \ne e$, the torsion-freeness of $H$ gives $h^2 \ne e$, so
\[
gag = (h^2, c) \in B,
\]
which contradicts $XAX \cap B = \varnothing$. Therefore every element $g = (h, \varepsilon)$ of $X$ satisfies $h = e$, i.e.,
\[
X \subset \{ (e, e), (e, c) \} = \{ e, a \},
\]
and hence $|X| \le 2$, a contradiction. Therefore, no infinite subset $X$ satisfying Conjecture~\ref{con:main} exists.
\end{proof}

This construction works for any group satisfying both conditions of Conjecture~\ref{con:l3}. Moreover, by Remark~\ref{rem:newelski-1.3}, this counterexample is an extremal case among torsion-free groups with countable square sets.
It is worth noting that this counterexample corresponds to $W = \{e, a\}$, exactly the finite $W$ restriction indicated by Lemma~\ref{lem:necessary}. From this counterexample, we can give a class of counterexamples refuting Conjecture~\ref{con:l1}, which is independent of Lemma~\ref{lem:necessary}.

\begin{theorem}\label{thm:converse}
Let $H$ be an infinite group satisfying
\[
|H^2| < |H|, \qquad H[2] \text{ is finite},
\]
then Conjecture~\ref{con:main} fails.
\end{theorem}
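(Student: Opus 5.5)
The plan is to repeat the construction in the proof of Theorem~\ref{thm:main} with $H$ now an arbitrary infinite group satisfying the hypotheses. Torsion-freeness was used there only to conclude $h^2\ne e$ for $h\ne e$; finiteness of $H[2]$ should be enough to replace it. Concretely, set $G=H\times C_2$ with $C_2=\langle c\rangle$, $a:=(e,c)\in\Zent(G)$, and take
\[
A=\{a\},\qquad B=\{(s,c): s\in H^2,\ s\ne e\}.
\]

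First I will check the hypotheses of Conjecture~\ref{con:main}. $G$ is infinite with $|G|=|H|$, since $H$ is infinite. We have $A\cap B=\varnothing$ because the first coordinate of $a$ is $e$. Clearly $|A|=1<|G|$. Finally $|B|\le|H^2|<|H|=|G|$. Note that $H^2$ may even be finite here; this causes no problem, because only the strict inequality is needed.

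Next I will compute the witness set $W_G(A,B)$ of Definition~\ref{def:W} and then apply Lemma~\ref{lem:necessary}. Let $g=(h,\varepsilon)\in G$. Since $a$ is central and $a^{-1}=a$, the four products are
\[
gag=g^2a=(h^2,c),\qquad g^{-1}ag^{-1}=(h^{-2},c),\qquad gag^{-1}=g^{-1}ag=a .
\]
The element $a$ is not in $B$. The element $(h^2,c)$ lies in $B$ exactly when $h^2\ne e$, and the same holds for $(h^{-2},c)$, since $h^{-2}=(h^{-1})^2\in H^2$. Hence
\[
W=\{(h,\varepsilon): h\in H[2]\}=H[2]\times C_2,
\]
which is finite of size $2|H[2]|$. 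By Lemma~\ref{lem:necessary}, $A$ and $B$ are not separable, so Conjecture~\ref{con:main} fails.

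I do not expect a serious obstacle; the only delicate point is bookkeeping. I need to exclude $e$ from $B$ so that $a\notin B$, and I need to make sure that removing $e$ does not spoil the argument. It does not: $h^2=e$ is precisely the exempted case $h\in H[2]$, so the finiteness of $H[2]$ is exactly what bounds $W$. If one prefers to avoid Lemma~\ref{lem:necessary}, the same computation applied directly to $x=x'\in X$ gives $X\subset H[2]\times C_2$, contradicting condition (1) of Definition~\ref{def:sep}.
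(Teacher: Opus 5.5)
Your proof is correct and takes the paper's route: the same $G=H\times C_2$, $A=\{a\}$ and $B=\{(s,c): s\in H^2,\ s\ne e\}$, with the computation $gag=(h^2,c)$ forcing $h\in H[2]$. The only difference is cosmetic: you compute $W_G(A,B)=H[2]\times C_2$ exactly and invoke Lemma~\ref{lem:necessary}, while the paper argues directly on elements of $X$, which is the alternative you give at the end.
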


\begin{proof}

Take $G = H \times C_2$, $C_2 = \langle c \rangle$, $c^2 = e$, $a = (e, c)$, $A = \{a\} \subset G$, $B = \{ (s, c) : s \in H^2,\ s \ne e \} \subset G$.
First, $A \cap B = \varnothing$ is obvious. $|A| = 1 < |G|$,
\[
|B| = |\{ (s, c) : s \in H^2,\ s \ne e \}| = |\left(H^2 \setminus \{e\}\right) \times \{c\}| \le |H^2| < |H| = |G|.
\]
Suppose there exists an infinite set $X \subset G$ satisfying the conditions of Conjecture~\ref{con:main}. Take $g = (h, \varepsilon) \in X$; then $gag = (h^2, c)$. Since $XAX \cap B = \varnothing$, we have $gag \notin B$, so $h^2 = e$, hence $h \in H[2]$. Therefore $X \subset H[2] \times C_2$, and $|X| \le |H[2] \times C_2| < \aleph_0$. Thus no infinite symmetric $X$ exists, i.e., Conjecture~\ref{con:main} fails.
\end{proof}

\begin{corollary}\label{cor:eq}
If Conjecture~\ref{con:main}
holds, then Conjecture~\ref{con:l3} holds. Equivalently, any counterexample to Conjecture~\ref{con:l3} yields a counterexample to Conjecture~\ref{con:main}.
\end{corollary}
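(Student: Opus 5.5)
This follows by contraposition from Theorem~\ref{thm:converse}. We need to show: if Conjecture~\ref{con:l3} fails, then Conjecture~\ref{con:main} fails.

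First I make the negation of Conjecture~\ref{con:l3} explicit. It asserts the existence of an infinite group $H$ such that $|H^2| \ne |H|$ and $H[2]$ is finite. Since $H^2$ is the image of $H$ under the squaring map $g \mapsto g^2$, we have $|H^2| \le |H|$. Hence $|H^2| \ne |H|$ is equivalent to $|H^2| < |H|$. So a counterexample to Conjecture~\ref{con:l3} is exactly an infinite group $H$ with $|H^2| < |H|$ and $H[2]$ finite, which are precisely the hypotheses of Theorem~\ref{thm:converse}.

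Next I apply Theorem~\ref{thm:converse} to this $H$. The construction there gives the infinite group $G = H \times C_2$ with $a = (e,c)$, $A = \{a\}$ and $B = \{(s,c) : s \in H^2,\ s \ne e\}$. These sets are disjoint, satisfy $|A|, |B| < |G|$, and are not separable in $G$. So Conjecture~\ref{con:main} fails, which proves the second formulation. The first formulation (Conjecture~\ref{con:main} implies Conjecture~\ref{con:l3}) is its contrapositive, so the two are logically equivalent.

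The only point needing care is the cardinality step $|H^2| \le |H|$. It holds because $H^2$ is a surjective image of $H$, which in general uses the Axiom of Choice; the paper works in ZFC, so this is fine. I will also check that $H \times C_2$ has the same cardinality as $H$, so that $|B| \le |H^2| < |H| = |G|$. This holds because $H$ is infinite, and it is already contained in the proof of Theorem~\ref{thm:converse}. Beyond these checks the argument is purely logical, with no real obstacle.
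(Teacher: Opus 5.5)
Your proposal is correct and follows the paper's proof, which is simply the contrapositive of Theorem~\ref{thm:converse}. Your explicit observation that $|H^2| \le |H|$ turns the negation of Conjecture~\ref{con:l3} into exactly the hypothesis $|H^2| < |H|$ of that theorem is a detail the paper leaves implicit.
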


\begin{proof}
This is the contrapositive of Theorem~\ref{thm:converse}.
\end{proof}

\section{Second Counterexample: Rigid Binary Relations}

\begin{theorem}
\label{thm:l1}
Conjecture~\ref{con:l1} holds.
\end{theorem}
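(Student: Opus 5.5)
The plan is to prove the statement in the strong form ``for every infinite set $\Omega$ there is such a relation''. The witness is a well-ordering of $\Omega$, and the countable case additionally gets a symmetric (graph) witness, which is the form the later transposition construction will presumably use. We work in ZFC, so condition~(1) will be checked explicitly rather than ignored.

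\textbf{Step 1: the relation.} Let $\kappa=|\Omega|$. By the Axiom of Choice, fix a bijection between $\Omega$ and the ordinal $\kappa$, and let $\mathcal R=\{(u,v): u<v\}$ be the transported strict well-order.

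\textbf{Step 2: rigidity.} Let $\sigma\in\Aut(\Omega,\mathcal R)$. Then $\sigma$ and $\sigma^{-1}$ are both order-preserving bijections of a well-order. Suppose $\sigma\neq\mathrm{id}$, and let $u$ be the $<$-least point with $\sigma(u)\ne u$.
\begin{itemize}
\item If $\sigma(u)<u$, put $w=\sigma(u)$. Minimality of $u$ gives $\sigma(w)=w$, so $\sigma(w)=\sigma(u)$ with $w\neq u$, contradicting injectivity.
\item If $u<\sigma(u)$, then $u=\sigma(t)$ for some $t$. Every $s<u$ satisfies $\sigma(s)=s\ne u$, so $t\ge u$. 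Then $t>u$, since $t=u$ would give $\sigma(u)=u$. Hence $u=\sigma(t)>\sigma(u)$, a contradiction.
\end{itemize}
Therefore $\Aut(\Omega,\mathcal R)=\{\mathrm{id}\}$, which is condition~(2).

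\textbf{Step 3: cardinality condition~(1).} Both $\mathcal R$ and $\mathcal R^c$ are subsets of $\Omega\times\Omega$, so each has cardinality at most $\kappa\cdot\kappa=\kappa$. It remains to show $|\Sym(\Omega)|=2^\kappa>\kappa$.
\begin{itemize}
\item Upper bound: $\Sym(\Omega)\subset\Omega^\Omega$, so $|\Sym(\Omega)|\le\kappa^\kappa=2^\kappa$.
\item Lower bound: write $\Omega=\bigsqcup_{i\in I}\{p_i,q_i\}$ with $|I|=\kappa$, using $\kappa=\kappa\cdot 2$. For each $J\subset I$, the product of the transpositions $(p_i\,q_i)$ over $i\in J$ is a well-defined permutation, since these transpositions have disjoint supports. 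Distinct $J$ give distinct permutations, so $|\Sym(\Omega)|\ge 2^\kappa$.
\end{itemize}
Cantor's theorem then gives $|\mathcal R|,|\mathcal R^c|\le\kappa<2^\kappa=|\Sym(\Omega)|$.

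\textbf{Step 4: graph version for $\Omega=\mathbb N$.} Let $\mathcal R$ be the edge relation of the one-way infinite path $0-1-2-\cdots$. Take $\sigma\in\Aut(\Omega,\mathcal R)$.
\begin{itemize}
\item The vertex $0$ is the unique vertex of degree~$1$, and automorphisms preserve degree, so $\sigma(0)=0$.
\item By induction on $n$: vertex $n+1$ is the unique neighbour of $n$ other than $n-1$ (for $n=0$, simply the unique neighbour of $0$). If $\sigma$ fixes $n$ and $n-1$, it must therefore fix $n+1$.
\end{itemize}
So $\sigma=\mathrm{id}$. Also $|\mathcal R|,|\mathcal R^c|\le\aleph_0<2^{\aleph_0}$, so condition~(1) holds as well.

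\textbf{Main obstacle.} The only delicate step is condition~(1), namely the lower bound $|\Sym(\Omega)|\ge 2^{\kappa}$. Rigidity is routine. The lower bound is where choice enters: both in decomposing $\Omega$ into $\kappa$ disjoint pairs and in the identity $\kappa\cdot\kappa=\kappa$. This matches the parenthetical in the statement that condition~(1) is automatic under AC.
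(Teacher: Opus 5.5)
Your proof is correct, but it takes a different route from the paper.

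\textbf{The paper's route.} The paper works only with $\Omega=\mathbb{N}$ and the graph with edges $\{n,m\}$, $0<n<m\le 2n$. Its vertex degrees $\lfloor 3n/2\rfloor$ are pairwise distinct, so rigidity follows in one line from automorphisms preserving degree. Condition (1) is then just $\aleph_0<2^{\aleph_0}$.

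\textbf{Your route.} You prove the stronger statement that every infinite set carries a rigid relation. Your least-moved-point argument for the well-order is sound. Your bound $|\Sym(\Omega)|\ge 2^{|\Omega|}>|\Omega|=|\Omega\times\Omega|$ makes precise the remark that condition (1) is automatic under AC. Your path graph on $\mathbb{N}$ is an equally valid symmetric witness. It needs a short induction where the paper's degree argument needs none.

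\textbf{What the generality does not buy.} It gives no input for Theorem~\ref{thm:l2}. Since $(u\;v)=(v\;u)$, the sets $A$ and $B$ there are disjoint only when $\mathcal{R}$ is symmetric. For a well-order, both $A$ and $B$ would contain every transposition. You anticipated this: only your path graph, like the paper's graph, can be fed into the transposition construction. A symmetric rigid relation on an arbitrary infinite set would need an extra encoding argument, such as replacing ordered pairs of the well-order by asymmetric gadgets in an undirected graph. The statement as written does not require this.
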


\begin{proof}
Take $\Omega = \mathbb{N}$. Consider an undirected simple graph $\Gamma = (V, E)$ with vertex set $V = \mathbb{N}$ and edge set $E = \bigl\{\{n, m\} : 0 < n < m \le 2n\bigr\}$. Consider the adjacency relation $\mathcal{R} \subset \mathbb{N} \times \mathbb{N}$ defined by $$n\mathcal{R}m \iff \{n, m\} \in E \,\,\text{and}\,\, n \ne m.$$ It is easy to see that $$|\mathcal{R}| = |E| = \aleph_0 \le 2^{\aleph_0} = |\Sym(\mathbb{N})|.$$
Since $\mathcal{R}^c \subset \mathbb{N} \times \mathbb{N}$,
we have $$|\mathcal{R}^c| \le |\mathbb{N} \times \mathbb{N}| = \aleph_0 < 2^{\aleph_0} = |\Sym(\mathbb{N})|.$$
An automorphism $\sigma$ satisfies $u\mathcal{R}v \iff \sigma(u)\mathcal{R}\sigma(v)$, which is equivalent to $\sigma$ preserving the adjacency relation, i.e., preserving the edge set $E$. Therefore, $\Aut$ is the automorphism group of the graph $\Gamma$. For any vertex $n$,
\[
\deg(n) = \#\{m > n : n < m \le 2n\} + \#\{m < n : m < n \le 2m\}
= n + \left(n - \left\lceil\frac{n}{2}\right\rceil\right) = \left\lfloor \frac{3n}{2} \right\rfloor,
\]
so $\deg(n)$ is strictly increasing with $n$, and all vertex degrees are distinct. Since graph automorphisms must preserve the degree of each vertex, any automorphism must map each vertex to itself. Therefore $\Aut(\Gamma) = \{\mathrm{id}\}$, i.e., $\Aut(\Omega, \mathcal{R}) = \{\mathrm{id}\}$. Thus $\Omega = \mathbb{N}$ satisfies the conditions, and Conjecture~\ref{con:l1} holds.
\end{proof}

\begin{theorem}
\label{thm:l2}
If Conjecture~\ref{con:l1} holds, then Conjecture~\ref{con:main} fails.
\end{theorem}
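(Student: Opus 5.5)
The plan is to realize the rigid relation inside the symmetric group, using the transpositions of $\Sym(\Omega)$. Then I would show that the witness set $W_G(A,B)$ collapses to $\{\mathrm{id}\}$, and conclude with Lemma~\ref{lem:necessary}.

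\textbf{Setup.} Let $(\Omega,\mathcal{R})$ be as in Conjecture~\ref{con:l1}. For the concrete instance I would use the graph relation of Theorem~\ref{thm:l1}, which is symmetric and irreflexive. Put $G=\Sym(\Omega)$, which is infinite. Define
\[
A=\{(u\ v): u\neq v,\ u\mathcal{R}v\},\qquad B=\{(u\ v): u\neq v,\ \neg(u\mathcal{R}v)\}.
\]
Because $\mathcal{R}$ is symmetric, membership of a transposition $(u\ v)$ in $A$ does not depend on the order of $u,v$. Hence $A\cap B=\varnothing$, and $A\cup B$ is the set of all transpositions. For the cardinality hypotheses, $|A|\le|\mathcal{R}|$ and $|B|\le|\mathcal{R}^c|$, so clause (1) of Conjecture~\ref{con:l1} gives $|A|,|B|<|G|$. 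In the concrete instance, $|A|,|B|\le\aleph_0<2^{\aleph_0}=|\Sym(\mathbb{N})|$.

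\textbf{Key step: $W=\{\mathrm{id}\}$.} Let $\sigma\in W$. Of the four conditions defining $W$, I only need $\sigma A\sigma^{-1}\cap B=\varnothing$ and $\sigma^{-1}A\sigma\cap B=\varnothing$.
\begin{itemize}
\item Conjugation gives $\sigma(u\ v)\sigma^{-1}=(\sigma(u)\ \sigma(v))$, so conjugation by $\sigma$ permutes the set of transpositions $A\cup B$.
\item Therefore $\sigma A\sigma^{-1}\cap B=\varnothing$ forces $\sigma A\sigma^{-1}\subset A$. In terms of the relation: $u\mathcal{R}v\Rightarrow\sigma(u)\mathcal{R}\sigma(v)$.
\item The same argument with $\sigma^{-1}$ gives $\sigma^{-1}A\sigma\subset A$, which is the reverse implication.
\item So $\sigma\in\Aut(\Omega,\mathcal{R})$, and by clause (2) of Conjecture~\ref{con:l1}, $\sigma=\mathrm{id}$.
\end{itemize}
Since $\mathrm{id}\in W$ always, $W=\{\mathrm{id}\}$. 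This $W$ is finite, so Lemma~\ref{lem:necessary} shows $A$ and $B$ are not separable, and Conjecture~\ref{con:main} fails.

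\textbf{Main obstacle.} The delicate point is the symmetry of $\mathcal{R}$. A transposition $(u\ v)$ does not remember the order of $u$ and $v$. For a non-symmetric $\mathcal{R}$, the sets $A$ and $B$ defined above could overlap, and an automorphism of the symmetrized relation need not be an automorphism of $\mathcal{R}$.

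I would handle this by invoking Theorem~\ref{thm:l1}, whose relation is the symmetric edge relation of a rigid graph; this suffices to refute Conjecture~\ref{con:main}. For a fully general directed $\mathcal{R}$, my first attempt would be to encode $\mathcal{R}$ as a rigid undirected graph, for example by subdividing each arc with an oriented gadget. Then I would apply the same transposition argument to that graph.
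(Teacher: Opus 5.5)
Your proof is correct and is essentially the paper's argument: $G=\Sym(\Omega)$, with $A$ and $B$ the transpositions coming from $\mathcal{R}$ and $\mathcal{R}^c$, the identity $\sigma(u\;v)\sigma^{-1}=(\sigma(u)\;\sigma(v))$, and rigidity; you merely package it as $W=\{\mathrm{id}\}$ followed by Lemma~\ref{lem:necessary}, which the paper itself observes after its proof. Your two precautions tighten that argument rather than change it: restricting to the symmetric irreflexive relation of Theorem~\ref{thm:l1} is genuinely needed, since for non-symmetric $\mathcal{R}$ the paper's claim $A\cap B=\varnothing$ is false, and using both $\sigma A\sigma^{-1}$ and $\sigma^{-1}A\sigma$ repairs the paper's one-sided step (on an infinite set a non-identity bijection can map $\mathcal{R}$ properly into itself, so rigidity alone does not give $u\mathcal{R}v$ with $g(u)\mathcal{R}^c g(v)$ for $g$ itself), which also makes your gadget sketch for directed $\mathcal{R}$ unnecessary for the conclusion.
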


\begin{proof}
Let $G = \Sym(\Omega)$, $A = \{(u\;v) \in G : u\mathcal{R}v\}$, $B = \{(u\;v) \in G : u\mathcal{R}^c v\}$, where $(u\;v)$ denotes the transposition exchanging $u$ and $v$. Then $A \cap B = \varnothing$, and $|A| = |\mathcal{R}| < |\Sym(\Omega)| = |G|$, $|B| = |\mathcal{R}^c| < |\Sym(\Omega)| = |G|$. Suppose there exists an infinite subset $X \subset G$ satisfying the conditions of Conjecture~\ref{con:main}. Take $g \in X \setminus \{e\}$; since $\Aut(\Omega, \mathcal{R}) = \{\mathrm{id}\}$, there must exist a transposition $(u\;v)$ with $u\mathcal{R}v$ and $g(u)\mathcal{R}^c g(v)$. Let $a = (u\;v) \in A$; then $gag^{-1} = (g(u)\;g(v)) \in B$. Since $g \in X$ and $g^{-1} \in X^{-1} = X$, we have $gag^{-1} \in XAX$, so $XAX \cap B \ne \varnothing$, a contradiction. Therefore no infinite subset $X$ satisfying the conditions exists, i.e., Conjecture~\ref{con:main} fails.
\end{proof}

Theorems~\ref{thm:l1} and~\ref{thm:l2} together give a counterexample to Conjecture~\ref{con:main}. It can be seen that this counterexample corresponds to $W = \{\mathrm{id}\}$, exactly the finite $W$ restriction indicated by Lemma~\ref{lem:necessary}. Similarly, we have given a class of counterexamples refuting Conjecture~\ref{con:l1}, which is independent of both Lemma~\ref{lem:necessary} and Conjecture~\ref{con:l3}.

By Lemmas \ref{lem:necessary} and \ref{lem:greedy}, separability is almost entirely determined by the size of $W$, and the size of $W$ depends on the specific conjugation relation of $A$ relative to $G$, with no implication from the cardinalities $|A|$ and $|B|$. The error in Conjecture~\ref{con:main} lies precisely in using cardinality conditions to constrain conjugation conditions. We can give a separability theorem in terms of conjugation relations, namely Theorem~\ref{thm:main-negative}.

\section{Further Questions}\label{sec:alt}

Naturally, we discuss the size of the set $W$. To refute Conjecture~\ref{con:main}, must $W$ necessarily be finite? Using the Tencent Hy3 model, guided by the preceding information, we give the following counterexamples and proofs.
This section gives two counterexamples showing that even when $W$ is infinite, $A$ and $B$ may still be inseparable. This shows that the strict inequality $|W| > \lambda$ in Lemma~\ref{lem:greedy} cannot be weakened to ``$W$ is infinite'' or $|W| \ge \lambda$.

\subsection{The $\mathbf{S_3}$-valued Function Group}

Let $S_3$ be the symmetric group on three letters, and $A_3$ its alternating subgroup.
Take
\[
G = \bigl\{ f \in \prod_{n \in \mathbb{N}} S_3 : f(n) \in A_3 \text{ fails to hold for at most finitely many } n \bigr\},
\]
with coordinatewise operation. Let $\tau = (12) \in S_3$, and define
\[
a_n \in G, \qquad a_n(m) = \begin{cases}
\tau, & m = n, \\
e,   & m \ne n,
\end{cases}
\]
and $E = \bigl\{ f \in G : f(n) \in \{e, \tau\}, \forall n \in \mathbb{N} \bigr\}$. Take
$A = \{ a_n : n \in \mathbb{N} \}$,
$B = \bigl\{ \text{elements that take the value } \\(13) \text{ or } (23) \text{ at some coordinate and } e \text{ at all others} \bigr\}\cup \bigl( E \setminus (A \cup \{e\}) \bigr)$.

\begin{proposition}
\label{prop:sep}
For the above $G, A, B$, we have
\begin{enumerate}
  \item[(1)] $G$ is a group, $|G| = 2^{\aleph_0}$;
  \item[(2)] $A \cap B = \varnothing$, and $|A| = |B| = \aleph_0 < |G|$;
  \item[(3)] $W_G(A, B) = E$, so $|W| = \aleph_0$;
  \item[(4)] Any set $X \subset G$ satisfying conditions (2)(3)(4) of Definition~\ref{def:sep} must be $\{e\}$, so $A$ and $B$ are not separable.
\end{enumerate}
\end{proposition}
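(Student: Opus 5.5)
The plan is to verify the four items in order. Each reduces to a coordinatewise computation in $S_3$. The only structural facts needed are that the centralizer of $\tau$ in $S_3$ is $\{e,\tau\}$, and that conjugating $\tau$ by any other element gives $(13)$ or $(23)$.

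\textbf{Items (1) and (2).} Since $A_3$ is a subgroup of $S_3$, the coordinatewise product or inverse of elements of $G$ lies outside $A_3$ only on a finite union of finite sets. So $G$ is a subgroup of $\prod_{n} S_3$. It contains $\prod_n A_3$, which has cardinality $3^{\aleph_0}=2^{\aleph_0}$, and is contained in $S_3^{\mathbb{N}}$, so $|G|=2^{\aleph_0}$.

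Because $\tau\notin A_3$, every $f\in E$ equals $\tau$ on only finitely many coordinates. Hence $E$ is in bijection with the finite subsets of $\mathbb{N}$ via $f\mapsto f^{-1}(\tau)$, and $E$ is countably infinite. From this, $A$ is countable, both pieces of $B$ are countable, and $|A|=|B|=\aleph_0<|G|$.

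For disjointness:
\begin{itemize}
\item the first piece of $B$ has a nonidentity value other than $\tau$, so it misses $A\subset E$;
\item the second piece is $E\setminus(A\cup\{e\})$, which misses $A$ by definition.
\end{itemize}

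\textbf{Item (3), inclusion $E\subset W$.} Take $u\in E$. Then $u$ is an involution, so $u=u^{-1}$ and all four products in Definition~\ref{def:W} equal $u a_n u$. Compute coordinatewise:
\begin{itemize}
\item at $m\neq n$ the value is $u(m)^2=e$;
\item at $n$ the value is $u(n)\tau u(n)=\tau$, since $u(n)\in\{e,\tau\}$ commutes with $\tau$.
\end{itemize}
Thus $u a_n u=a_n\notin B$, and $u\in W$.

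\textbf{Item (3), inclusion $W\subset E$.} If $u\notin E$, pick $m$ with $u(m)\notin\{e,\tau\}$ and consider $u a_m u^{-1}$.
\begin{itemize}
\item At coordinates $k\ne m$ the value is $e$.
\item At $m$ the value is $u(m)\tau u(m)^{-1}$. This is a transposition different from $\tau$, because $u(m)$ is outside the centralizer $\{e,\tau\}$ of $\tau$.
\end{itemize}
So $u a_m u^{-1}$ lies in the first piece of $B$, and $u\notin W$. Hence $W=E$ and $|W|=\aleph_0$.

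\textbf{Item (4).} Let $X$ satisfy (2)--(4) of Definition~\ref{def:sep}. By Lemma~\ref{lem:necessary}, $X\subset W=E$. Suppose some $x\in X$ is not $e$, and let $S=x^{-1}(\tau)$, a finite nonempty set. Choose $n\notin S$. Since $e\in X$, the product $x a_n e=x a_n$ lies in $XAX$. This element lies in $E$ and takes the value $\tau$ exactly on $S\cup\{n\}$, which has at least two elements. So it is neither $e$ nor in $A$, hence it lies in $E\setminus(A\cup\{e\})\subset B$, which is a contradiction. Therefore $X=\{e\}$, so condition (1) fails and $A$ and $B$ are not separable.

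This final step is the key point. Lemma~\ref{lem:greedy} cannot apply here because $|W|=\lambda$. What defeats separability is not any single $u$ but the cross terms between $x$ and $e$, which the second piece of $B$ is designed to catch. The other steps are routine bookkeeping, with the centralizer computation in $S_3$ as the one point requiring care.
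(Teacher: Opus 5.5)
Your proposal is correct and follows the paper's proof essentially step for step: the same centralizer computation $\Cent_{S_3}(\tau)=\{e,\tau\}$ gives $W=E$, and the same cross term $x a_n e$ with $n\notin\supp(x)$ forces $X=\{e\}$. The only difference is cosmetic. You bound $|G|$ above by $|S_3^{\mathbb{N}}|=2^{\aleph_0}$ instead of by the paper's union of the $G_F$, and you avoid the paper's informal cardinal subtraction in computing $|B|$; both changes are slightly cleaner.
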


\begin{proof}
\emph{(1)}
Since $A_3 \trianglelefteq S_3$, the condition ``falls in $A_3$ except at finitely many coordinates'' is closed under coordinatewise multiplication and inversion, so $G$ is a group. Since $\prod_{n} A_3 \subset G$, we have $|G| \ge 3^{\aleph_0} = 2^{\aleph_0}$; on the other hand, for each finite set $F \subset \mathbb{N}$, let $G_F = S_3^F \times A_3^{\mathbb{N} \setminus F}$; then $G = \bigcup_{F} G_F$, and $|G_F| = 6^{|F|} \cdot 3^{\aleph_0} = 2^{\aleph_0}$, so $|G| \le \aleph_0 \cdot 2^{\aleph_0} = 2^{\aleph_0}$. Hence $|G| = 2^{\aleph_0}$.

\emph{(2)}
Functions in $E$ take only $e$ or $\tau$ at each coordinate; to satisfy the cofinite condition of $G$, their support must be finite. Conversely, any function with finite support and values in $\{e, \tau\}$ lies in $E$, so $|E| = \aleph_0$. We have $A \subset E$, while the part of $B$ consisting of elements taking $(13)$ or $(23)$ at some coordinate is not in $E$ and is disjoint from $A$; the second part has $A \cup \{e\}$ removed, so it is also disjoint from $A$. Hence $A \cap B = \varnothing$. $|A| = \aleph_0$, and $|B| = 2\aleph_0 + (|E| - |A| - 1) = \aleph_0 < |G|$.

\emph{(3)}
We have $W = \{ u \in G : \{u, u^{-1}\} A \{u, u^{-1}\} \cap B = \varnothing \}$. Suppose $u \notin E$. Then there exists $n$ such that $u(n) \notin \{e, \tau\} = \Cent_{S_3}(\tau)$. Take $a = a_n \in A$; then $u a u^{-1}$ has value $u(n) \tau u(n)^{-1}$ at coordinate $n$, which is a transposition different from $\tau$, namely $(13)$ or $(23)$. Thus $u a u^{-1}$ lies in the first part of $B$, so $u \notin W$.

Conversely, suppose $u \in E$. Then at each coordinate $u(m) \in \{e, \tau\}$, so $u(m)^2 = e$, hence $u^{-1} = u$. For any $a = a_n$, since $u(n) \in \{e, \tau\}$ commutes with $\tau$, we get $u a u = a$. Thus $\{u, u^{-1}\} A \{u, u^{-1}\} = u A u = A$, which is disjoint from $B$, so $u \in W$. In summary, $W = E$ and $|W| = \aleph_0$.

\emph{(4)}
Suppose $X \subset G$ satisfies $e \in X$, $X = X^{-1}$, and $XAX \cap B = \varnothing$. By Lemma~\ref{lem:necessary}, $X \subset W = E$. If there exists $x \in X$ with $x \ne e$, then $x \in E$ and its support $\supp(x)$ is a non-empty finite set. Choose $n \notin \supp(x)$. Using $e \in X$, we have
\[
x a_n = x a_n e \in XAX,
\]
so $x a_n \notin B$. At coordinate $n$, since $x(n) = e$, we get $\tau$; at other coordinates $m \ne n$, the value coincides with that of $x$, which is $e$ or $\tau$. Therefore $x a_n$ takes values in $\{e, \tau\}$, with support $\supp(x) \cup \{n\}$, still finite, so $x a_n \in E$. Since $|\supp(x a_n)| \ge 2$, $x a_n$ is neither $e$ nor any $a_m$ (which is non-trivial at only one coordinate), so $x a_n \in E \setminus (A \cup \{e\}) \subset B$, a contradiction. Therefore, $X$ contains no non-trivial element, and since $e \in X$, we have $X = \{e\}$. This is a finite set, failing Definition~\ref{def:sep}(1), so $A$ and $B$ are not separable.
\end{proof}

\subsection{The Semidirect Product $\mathbf{V \rtimes U}$}

Let $V = \bigoplus_{i \in \mathbb{N}} \mathbb{F}_2 e_i$ be a countable-dimensional vector space over $\mathbb{F}_2$ with basis $\{e_i\}_{i \in \mathbb{N}}$.
Define
\[
U = \Bigl\{ u \in \operatorname{GL}(V) : u(e_i) = e_i + \sum_{j < i} c_{ij} e_j,\; c_{ij} \in \mathbb{F}_2 \Bigr\},
\]
the automorphism group of unit lower-triangular matrices. Let $G = V \rtimes U$ with multiplication
\[
(v, u)(w, s) = (v + u(w),\, u s).
\]
Take
$A = \{ (e_i, 1) : i \in \mathbb{N} \}$, 
$B = \{ (v, 1) : v \in V \setminus (\{0\} \cup \{e_i : i \in \mathbb{N}\}) \}$.

\begin{proposition}
\label{prop:sep2}
For the above $G, A, B$, we have
\begin{enumerate}
  \item[(1)] $U$ is a group, $|U| = 2^{\aleph_0}$, $|G| = 2^{\aleph_0}$;
  \item[(2)] $A \cap B = \varnothing$, $|A| = |B| = \aleph_0 < |G|$;
  \item[(3)] $W_G(A, B) = V \times \{1\}$, so $|W| = \aleph_0$;
  \item[(4)] Any set $X$ satisfying conditions (2)(3)(4) of Definition~\ref{def:sep} must be $\{e\}$, so $A$ and $B$ are not separable.
\end{enumerate}
\end{proposition}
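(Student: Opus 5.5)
The plan is to mirror the proof of Proposition~\ref{prop:sep}, using one conjugation computation in $G=V\rtimes U$ for both (3) and (4). Throughout we use that $V$ has characteristic $2$, so $-w=w$ for every $w\in V$. Then $(w,s)^{-1}=(s^{-1}(w),s^{-1})$ in $G$, and $(v,1)^{-1}=(v,1)$.

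\emph{(1)}. Let $V_n=\operatorname{span}\{e_0,\dots,e_n\}$. Every $u\in U$ maps $V_n$ into $V_n$, and the matrix of $u|_{V_n}$ is unit lower-triangular, hence invertible on $V_n$. The inverses $(u|_{V_n})^{-1}$ are compatible as $n$ grows, so they glue to a linear map $u^{-1}$. This map again sends $e_i$ to $e_i$ plus lower terms, so $u^{-1}\in U$. Products of maps of this shape clearly keep the shape, so $U$ is a subgroup of $\operatorname{GL}(V)$. For the cardinality, $u$ is determined by the finite columns $(c_{ij})_{j<i}$, and every choice of columns is allowed. Hence $|U|=\prod_i 2^{i}=2^{\aleph_0}$. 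Since $|V|=\aleph_0$, we get $|G|=\aleph_0\cdot 2^{\aleph_0}=2^{\aleph_0}$. I expect this step to be the main technical point: the remaining steps are direct calculations, but here one must make sure the inverse is genuinely a well-defined element of $U$. The flag/gluing argument handles this.

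\emph{(2)}. This is immediate. $A$ consists of the pairs $(v,1)$ with $\operatorname{wt}(v)=1$, while $B$ consists of the pairs $(v,1)$ with $\operatorname{wt}(v)\ge 2$. Both sets are countable and disjoint.

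\emph{(3)}. For $g=(v,s)$ the key identity is
\[
g(e_i,1)g^{-1}=(v+s(e_i),s)(s^{-1}(v),s^{-1})=(s(e_i),1).
\]
Suppose $s\ne 1$. Then some $i$ has $s(e_i)=e_i+\sum_{j<i}c_{ij}e_j$ with some $c_{ij}\neq 0$, so $\operatorname{wt}(s(e_i))\ge 2$. Hence $g(e_i,1)g^{-1}\in B$, and $g\notin W$. Now suppose $s=1$. Then $g=g^{-1}$, so $\{g,g^{-1}\}A\{g,g^{-1}\}=gAg$. Moreover
\[
g(e_i,1)g=(v+e_i+v,1)=(e_i,1)\in A,
\]
so $gAg=A$, which is disjoint from $B$. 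Therefore $W=V\times\{1\}$, and $|W|=\aleph_0$.

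\emph{(4)}. By Lemma~\ref{lem:necessary}, $X\subset W=V\times\{1\}$. Suppose $x=(v,1)\in X$ with $v\ne 0$. Since $\supp(v)$ is finite, we can pick $i\notin\supp(v)$. Because $e\in X$, we have
\[
x(e_i,1)e=(v+e_i,1)\in XAX,
\]
and $\operatorname{wt}(v+e_i)=\operatorname{wt}(v)+1\ge 2$, so this element lies in $B$. That contradicts $XAX\cap B=\varnothing$. Hence $X=\{e\}$, and $A$ and $B$ are not separable.
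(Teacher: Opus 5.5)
Your proof is correct and follows the paper's argument essentially step for step. The same conjugation identity $g(e_i,1)g^{-1}=(s(e_i),1)$ drives (3), and the same choice of $i\notin\supp(v)$, giving $x(e_i,1)e=(v+e_i,1)\in B$, drives (4). The only differences are minor refinements: you justify closure of $U$ under inverses via the flag $V_n$, which the paper merely asserts, and you phrase disjointness and the case $s\neq 1$ in terms of Hamming weight rather than by comparing $e_i$-coefficients.
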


\begin{proof}
\emph{(1)}
The product and inverse of unit lower-triangular matrices are still unit lower-triangular, so $U$ is a group. Each $u \in U$ is uniquely determined by countably many coefficients $c_{ij}$ ($j < i$), so $|U| = 2^{\aleph_0}$. Hence $|G| = |V| \cdot |U| = \aleph_0 \cdot 2^{\aleph_0} = 2^{\aleph_0}$.

\emph{(2)}
The first component of elements of $A$ ranges over all basis vectors $e_i$, while that of $B$ ranges over $V \setminus (\{0\} \cup \{e_i\})$; the two are disjoint. $|A| = \aleph_0$, $|B| = |V| - \aleph_0 = \aleph_0$, both less than $|G|$.

\emph{(3)}
For $x = (v, u) \in G$, we have $x^{-1} = (u^{-1}(v), u^{-1})$. Thus
\[
x (e_i, 1) x^{-1}
= (v + u(e_i), u) (u^{-1}(v), u^{-1})
= (v + u(e_i) + v,\; 1)
= (u(e_i), 1).
\]

If $u \ne 1$, take $i$ such that $u(e_i) \ne e_i$. By unit lower-triangularity, $u(e_i) = e_i + \sum_{j < i} c_{ij} e_j$, whose $e_i$-coefficient is $1$, so $u(e_i) \ne 0$; and if $u(e_i) = e_k$, comparing $e_i$-coefficients gives $k = i$, a contradiction. Therefore $u(e_i) \in V \setminus (\{0\} \cup \{e_j\})$, i.e., $x(e_i, 1) x^{-1} \in B$, so $x \notin W$.

If $u = 1$, then $x = (v, 1)$ and $x^{-1} = (v, 1) = x$. In this case $x(e_i, 1) x = (v + e_i, 1)(v, 1) = (v + e_i + v, 1) = (e_i, 1) \in A$; all four product sets equal $A$, which is disjoint from $B$, so $x \in W$. In summary, $W = V \times \{1\}$ and $|W| = \aleph_0$.

\emph{(4)}
By Lemma~\ref{lem:necessary}, $X \subset W = V \times \{1\}$. Suppose there exists $x = (v, 1) \in X$ with $v \ne 0$. Since $e \in X$, taking $x' = e$ gives
\[
x (e_i, 1) = (v + e_i, 1) \in XAX.
\]
The support $\supp(v)$ of $v$ is finite; choose $i \notin \supp(v)$. Then the Hamming weight of $v + e_i$ is $|\supp(v)| + 1 \ge 2$, so $v + e_i \notin \{0\} \cup \{e_j\}$, i.e., $(v + e_i, 1) \in B$, contradicting $XAX \cap B = \varnothing$. Therefore $X$ contains no element with $v \ne 0$, and since $e \in X$, we have $X = \{(0, 1)\} = \{e\}$. This is a finite set, so $A$ and $B$ are not separable.
\end{proof}

The above two counterexamples show that $W$ can be infinite, yet separation still fails because $X$ is forced into a tiny portion of $W$. The strict inequality $|W| > \lambda$ in Lemma~\ref{lem:greedy} is a sufficient condition for separation, but not a necessary one; when $|W| \le \lambda$, separability depends on finer structure.

It is worth pointing out that in the counterexamples of Theorem~\ref{thm:l1}, Proposition~\ref{prop:sep}, and Proposition~\ref{prop:sep2}, $X$ is forced to be $\{e\}$. Must the witness set $X$ always be $\{e\}$? The answer is no. In the counterexample of Theorem~\ref{thm:main}, $X$ can be taken as $\{a\}$ or $\{e, a\}$.

Although we know the truth value of Conjecture~\ref{con:main}, we have not resolved all problems. As can be seen, in all counterexamples constructed above, $|G| = 2^{\aleph_0}$; we do not know whether there exists a counterexample with $|G| = \aleph_0$. When $|G| = \aleph_0$, the condition $|A|, |B| < |G|$ forces $A, B$ to be finite, $\lambda = \aleph_0 = |G|$, and the hypothesis $|W| > \lambda$ of Lemma~\ref{lem:greedy} cannot be satisfied, requiring separate analysis. We also do not know whether, for an infinite cardinal $\kappa$ that cannot be written as $2^{\lambda}$, there exists a counterexample with $|G| = \kappa$. From the perspective of Conjecture~\ref{con:main} itself, we are trying to use the sizes of $A$ and $B$ as a criterion for separability; is there a criterion, stated purely in terms of $|A|$, $|B|$, and some structural invariant of $G$ (rather than $W$ itself), that is both necessary and sufficient? These questions remain open for now.


\begin{thebibliography}{99}

\bibitem{kourovka}
E. I. Khukhro and V. D. Mazurov, editors,
\emph{The Kourovka Notebook: Unsolved Problems in Group Theory},
No.~21, Sobolev Institute of Mathematics, Russian Academy of Sciences,
Novosibirsk, 2026.

\bibitem{Newelski1987}
L. Newelski,
\emph{On the number of squares in a group},
Proc. Amer. Math. Soc. \textbf{99} (1987), no.~2, 213--218.

\bibitem{protasovSparse}
I. Protasov,
\emph{Partitions of groups into sparse subsets},
Algebra Discrete Math. \textbf{13} (2012), no.~1, 107--110.

\bibitem{protasovLarge}
I. Protasov and S. Slobodianiuk,
\emph{Partitions of groups into large subsets},
J. Group Theory \textbf{18} (2015), no.~2, 291--298.

\bibitem{protasovDynamical}
I. Protasov and S. Slobodianiuk,
\emph{The dynamical look at the subsets of a group},
Appl. Gen. Topol. \textbf{16} (2015), no.~2, 217--224.

\bibitem{protasovSurvey}
I. Protasov and K. Protasova,
\emph{Recent progress in subset combinatorics of groups},
Ukrainian Math. Bull. \textbf{14} (2017), no.~4, 532--547.

\end{thebibliography}
\end{document}